\author{\Large{Damanvir Singh Binner 
}}
\begin{document}

\newcommand{\D}[1]{{\bf \color{red} #1}}
\newcommand{\M}[1]{{\bf \color{magenta} #1}}
\newcommand{\K}[1]{{\bf \color{violet} #1}}

\theoremstyle{plain}
\newtheorem{theorem}{Theorem}
\newtheorem{corollary}[theorem]{Corollary}
\newtheorem{lemma}[theorem]{Lemma}
\newtheorem{proposition}[theorem]{Proposition}
\newtheorem{question}[theorem]{Question}

\theoremstyle{definition}
\newtheorem{definition}[theorem]{Definition}
\newtheorem{example}[theorem]{Example}
\newtheorem{conjecture}[theorem]{Conjecture}

\theoremstyle{remark}
\newtheorem{remark}[theorem]{Remark}

\title{\Large{On $k$-measures and Durfee squares of partitions}}
\date{}
\maketitle
\begin{center}
\vspace*{-8mm}
\large{Department of Mathematics \\
Indian Institute of Science Education and Research (IISER) \\
Mohali, Punjab, India \\
damanvirbinnar@iisermohali.ac.in}
\end{center}

\begin{abstract}
Recently, Andrews, Bhattacharjee and Dastidar introduced the concept of $k$-measure of an integer partition, and proved a surprising identity that the number of partitions of $n$ which have $2$-measure $m$ is equal to the number of partitions of $n$ with a Durfee square of side $m$. The authors asked for a bijective proof of this result and also suggested a further exploration of the properties of the number of partitions of $n$ which have $k$-measure $m$ for $k \geq 3$. In this note, we perform these tasks. That is, we obtain a short combinatorial proof of the result of Andrews, Bhattacharjee and Dastidar, and using this proof, we obtain a natural generalization for $k$-measures. 
\end{abstract}

\section{Introduction}
\label{S1}

Andrews, Bhattacharjee and Dastidar \cite{Andrews1} introduced a new statistic of integer partitions, which they named as the $k$-measure. 

\begin{definition}
The $k$-measure of a partition is the length of the largest subsequence of parts in the partition in which the difference between any two consecutive parts of the subsequence is at least $k$.
\end{definition}

Recall that the Durfee square of a partition is the largest square that can be constructed in the Ferrers diagram of the partition beginning from the top left corner. Andrews, Bhattacharjee and Dastidar \cite{Andrews1} found a deep connection between these two statistics of a partition as described in the theorem below.

\begin{theorem}[Andrews, Bhattacharjee and Dastidar (2022)]
\label{Wonderful}
The number of partitions of $n$ with $2$-measure m equals the number of partitions of $n$ with Durfee square of side $m$.
\end{theorem}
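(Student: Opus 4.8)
The plan is to reduce the identity to the well-known generating function for Durfee squares and supply the missing combinatorial content as a single weight-preserving bijection. Recall that a partition with Durfee square of side $m$ splits uniquely into the $m\times m$ square, a partition to its right with at most $m$ parts, and a partition below it with all parts $\le m$; hence the generating function for partitions of $n$ with Durfee square of side $m$ is $\frac{q^{m^2}}{(q;q)_m^2}$, where $(q;q)_m=\prod_{i=1}^{m}(1-q^i)$. So it suffices to show that partitions with $2$-measure $m$ are generated by the same series. The first thing I would record is a structural lemma: the $2$-measure of a partition depends only on its set of distinct parts, and equals the largest sub-collection of those parts containing no two consecutive integers; grouping the distinct parts into maximal blocks of consecutive integers (``runs'') of lengths $\ell_1,\ell_2,\dots$, this equals $\sum_i \lceil \ell_i/2\rceil$. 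In particular a partition of $2$-measure $m$ always has weight at least $m^2$, with equality exactly for the staircase $(2m-1,2m-3,\dots,3,1)$.

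Next I would exploit the factorization $\frac{q^{m^2}}{(q;q)_m^2}=\frac{q^{m^2}}{(q;q)_m}\cdot\frac{1}{(q;q)_m}$ and read off the two factors combinatorially. The first factor generates partitions into exactly $m$ parts whose consecutive differences are all at least $2$ (``$2$-distinct partitions'' with $m$ parts): subtracting the staircase $(2m-1,\dots,1)$ from such a partition $\mu$ yields an ordinary partition with at most $m$ parts, which is the classical staircase bijection. The second factor generates partitions $\beta$ with all parts $\le m$. Composing these with the Durfee-square decomposition above gives a clean standard bijection between partitions with Durfee square $m$ and pairs $(\mu,\beta)$, where $\mu$ is a $2$-distinct partition with exactly $m$ parts and $\beta$ has all parts $\le m$, with $|\mu|+|\beta|=n$. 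Thus the entire theorem comes down to the following new statement, which I would prove directly:
\[
\#\{\lambda\vdash n:\ 2\text{-measure}(\lambda)=m\}
=\#\{(\mu,\beta):\ \mu\text{ is }2\text{-distinct with }m\text{ parts},\ \beta\text{ has parts}\le m,\ |\mu|+|\beta|=n\}.
\]

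To build the bijection I would start from the canonical maximal $2$-distinct subsequence $s_1>s_2>\dots>s_m$ obtained greedily from the top of $\lambda$ (equivalently, the ``marked'' distinct parts, one from each alternate position inside each run), and take this as a first approximation $\mu^{(0)}=(s_1,\dots,s_m)$ to $\mu$; the remaining multiset $R$ (the unmarked distinct parts together with the surplus multiplicities) should become $\beta$. Checking small cases shows this already succeeds whenever every part of $R$ is at most $m$: then $\mu=\mu^{(0)}$ and $\beta=R$. The genuine work is to handle leftover parts exceeding $m$ -- for instance $\lambda=(3,3,1)$ with $m=2$ leaves the surplus part $3$, which cannot be placed in $\beta$ -- by \emph{reallocating} that excess weight into raising the parts of $\mu$ (enlarging the right-hand region of the associated Durfee picture) while keeping $\mu$ $2$-distinct and leaving only parts $\le m$ in $\beta$. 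The hard part will be defining this reallocation so that it is manifestly weight-preserving, lands exactly in the target set of pairs, and is invertible; proving well-definedness and reversibility of the reallocation step is the single obstacle on which the whole argument turns. Once it is in place, the same template -- replace the $2$-staircase by the $k$-staircase $(1,1+k,1+2k,\dots)$ of weight $m+k\binom{m}{2}$ and the factor $\frac{q^{m^2}}{(q;q)_m}$ by $\frac{q^{\,m+k\binom{m}{2}}}{(q;q)_m}$ -- should yield the promised generalization to $k$-measures for $k\ge 3$.
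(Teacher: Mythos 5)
Your reduction is sound as far as it goes: the Durfee decomposition giving the generating function $\frac{q^{m^2}}{(q;q)_m^2}$, the staircase bijection identifying $\frac{q^{m^2}}{(q;q)_m}$ with $2$-distinct partitions into exactly $m$ parts, and your structural lemma (the $2$-measure depends only on the set of distinct parts and equals $\sum_i \lceil \ell_i/2 \rceil$ over maximal runs) are all correct, so the theorem is indeed equivalent to your displayed claim about pairs $(\mu,\beta)$. But the argument stops exactly where the theorem begins: the ``reallocation'' map that is supposed to absorb leftover parts exceeding $m$ into $\mu$ is never defined, and you yourself identify it as ``the single obstacle on which the whole argument turns.'' Since everything before that point is classical, this step is not a detail to be checked but the entire content of the theorem; as written, the proposal is a plan, not a proof. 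The difficulty is genuine: already for your example $\lambda=(3,3,1)$, $m=2$, the surplus part $3$ could be pushed into $\mu=(3,1)$ to give $(6,1)$ or $(5,2)$, and choosing canonically and invertibly in the presence of arbitrarily many surplus parts of arbitrary sizes and multiplicities is where all the work lies.

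The idea you are missing --- and the one the paper uses --- is to avoid exact statistics altogether. Let $c_m(n)$ count partitions of $n$ containing \emph{some} length-$m$ subsequence of parts with consecutive gaps at least $2$, and let $d_m(n)$ count partitions of $n$ with \emph{at least} $m$ parts that are $\geq m$. Then $a_m(n)=c_m(n)-c_{m+1}(n)$ and $b_m(n)=d_m(n)-d_{m+1}(n)$, so the theorem telescopes to the identity $c_m(n)=d_m(n)$, and that identity has a one-line bijection: leave every part outside the chosen subsequence $(\lambda_1,\dots,\lambda_m)$ untouched and replace $\lambda_i$ by $\lambda_i-(m-(2i-1))$. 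The shifts sum to zero, and the inequalities $\lambda_i \geq 1+2(m-i)$ give $\lambda_i-(m-(2i-1)) \geq m$, so the image lies in the ``at least'' set; the inverse map adds the shifts back. The point is that in the relaxed setting your troublesome surplus parts larger than $m$ require no reallocation at all --- they are simply left in place, because membership in $D_m(n)$ imposes no upper bound on the remaining parts. Replacing ``exactly $m$'' by ``at least $m$'' on both sides is precisely the device that dissolves the obstacle your approach runs into; without it you would have to construct the reallocation explicitly, for which no simple form is known.
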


The authors proved Theorem \ref{Wonderful} using $q$-series analysis. They concluded the paper by noting that a theorem as simply stated as Theorem \ref{Wonderful} should definitely have a bijective proof. They also suggested a further study of the properties of $k$-measures of partitions for $k > 2$. In a subsequent work, Andrews, Chern and Li \cite{Andrews2} used generalized Heine transformations to establish some trivariate generating function identities counting both the length and the k-measure for partitions and distinct partitions, respectively. As a corollary of their result, they obtained the following refinement of Theorem \ref{Wonderful}.

\begin{theorem}[Andrews, Chern and Li]
\label{Wonderful2}
The number of partitions of $n$ with $l$ parts and $2$-measure m equals the number of partitions of $n$ with $l$ parts and Durfee square of side $m$.
\end{theorem}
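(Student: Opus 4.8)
The plan is to prove the stronger, number-of-parts–refined statement by exhibiting an explicit bijection
\[
\Phi : \{\lambda \vdash n : \text{$2$-measure}(\lambda) = m\} \longrightarrow \{\mu \vdash n : \text{Durfee side}(\mu) = m\}
\]
that preserves the number of parts; since Theorem~\ref{Wonderful2} is precisely the refinement of Theorem~\ref{Wonderful} by the statistic $l$, a single length-preserving bijection proves both at once. First I would record a purely combinatorial description of the $2$-measure: since any two parts of a chain with consecutive gaps at least $2$ are distinct non-adjacent integers, the $2$-measure of $\lambda$ equals the largest number of distinct part-sizes of $\lambda$ that are pairwise non-adjacent, and this maximum is realized by the greedy chain $c_1 > c_2 > \cdots > c_m$ obtained by repeatedly taking the largest part-size that is at most (current value) $-\,2$. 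Every other distinct part-size is then forced to equal some $c_i - 1$, so the parts of $\lambda$ split canonically into the chain $c_1, \dots, c_m$ (one copy each) and a collection of \emph{extra} parts, namely the surplus copies of the $c_i$ together with the copies of the skipped values $c_i - 1$.

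The bijection itself I would build on the model furnished by the generating-function shadow of the statement: a partition of Durfee side $m$ dissects as an $m \times m$ square, an arm-partition $\alpha$ with at most $m$ parts placed to the right, and a leg-partition $\beta$ with all parts $\le m$ placed below, whence $|\mu| = m^2 + |\alpha| + |\beta|$ and $l(\mu) = m + (\text{number of parts of }\beta)$. On the $2$-measure side the chain carries exactly the data of the square and the arm: writing
\[
c_i = \alpha_i + \bigl(2(m-i)+1\bigr), \qquad \alpha_1 \ge \cdots \ge \alpha_m \ge 0,
\]
subtracts the staircase of odd numbers $(2m-1, 2m-3, \dots, 1)$ and identifies the chain with $m^2 + |\alpha|$, the size of the square together with the arm $\alpha$. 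In particular the base case is the clean correspondence between the minimal $2$-measure-$m$ partition $(2m-1, \dots, 3, 1)$ and the $m \times m$ square. Each extra part is then sent to one new row beneath the square, so that the chain accounts for the $m$ rows of the square and each extra part accounts for exactly one leg; this makes the preservation of $l$ transparent, $l = m + (\text{number of extra parts})$ on both sides, and is exactly the feature that upgrades Theorem~\ref{Wonderful} to Theorem~\ref{Wonderful2}.

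The delicate point, and the step I expect to be the main obstacle, is the passage from the extra parts to the legs. An extra part may have size larger than $m$, whereas every leg must be at most $m$, so such a part has to deposit a leg of size $\le m$ while its excess is carried back into the arm $\alpha$; the rule for this carry must be defined so that $\alpha$ remains weakly decreasing and, crucially, so that the whole assignment is invertible (a leg of size exactly $m$ could a priori come either from a genuine size-$m$ extra or from a larger one, and this ambiguity must be resolved, for instance by processing the extra parts in a fixed order and tracking a carry). A second thing to verify is that adjoining the skipped values $c_i - 1$ never increases the $2$-measure, i.e.\ that the greedy chain remains maximal; this is a short argument about maximal runs of consecutive integers, each run of length $r$ contributing $\lceil r/2\rceil$ to the measure. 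Once the carry is set up consistently and these checks are in place, $\Phi$ is a size- and length-preserving bijection, and Theorem~\ref{Wonderful2} follows.
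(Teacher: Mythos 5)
Your preparatory analysis is correct as far as it goes: the greedy chain $c_1 > c_2 > \cdots > c_m$ does realize the $2$-measure, every skipped part-size is forced to equal some $c_i - 1$, and the substitution $c_i = \alpha_i + (2(m-i)+1)$ correctly identifies the chain with the $m \times m$ square plus an arm $\alpha$ (the gap condition $c_i \ge c_{i+1}+2$ is exactly $\alpha_i \ge \alpha_{i+1}$). But the proof has a genuine gap precisely where you flag it yourself: the map from the extra parts to the legs is never constructed. Saying that an extra part of size $s > m$ must ``deposit a leg of size $\le m$ while its excess is carried back into the arm'' is a specification of what the rule needs to accomplish, not a rule. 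You do not say which leg an extra of size $s$ produces, how the excess is distributed among $\alpha_1, \dots, \alpha_m$ so that the arm stays weakly decreasing, how several carries interact, or how the result is inverted --- and you note yourself that a leg of size exactly $m$ has ambiguous provenance without resolving the ambiguity. Since well-definedness, injectivity, and surjectivity of $\Phi$ all hinge on this rule, the argument as written does not prove Theorem~\ref{Wonderful2}; it reduces it to an unsolved (and genuinely delicate) bookkeeping problem.

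It is instructive to compare with how the paper sidesteps exactly this obstruction. Instead of bijecting the exact-statistic sets, the paper bijects the cumulative sets $C_m(n)$ (partitions containing a length-$m$ subsequence with gaps $\ge 2$) and $D_m(n)$ (partitions with at least $m$ parts $\ge m$), and recovers the exact statement by differencing: $a_m(n) = c_m(n) - c_{m+1}(n)$ and $b_m(n) = d_m(n) - d_{m+1}(n)$. In $D_m(n)$ there is no ceiling of $m$ imposed on the parts outside the distinguished $m$, so the bijection can leave every part outside the chain completely untouched and simply replace $(\lambda_1, \dots, \lambda_m)$ by $(\lambda_1 - (m-1), \lambda_2 - (m-3), \dots, \lambda_m + (m-1))$: no legs, no carries, and the number of parts is manifestly preserved, which is what yields the refinement by $l$. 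The difficulty you ran into --- extras larger than $m$ having to become legs of size at most $m$ --- is an artifact of insisting on a direct bijection between the exact sets, and it disappears entirely under the cumulative reformulation. If you want to salvage your approach, the most economical fix is to adopt that reformulation rather than to engineer the carry rule.
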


As another corollary of their result, they got the following result for partitions with distinct odd parts. They defined $l(\lambda)$ to be the number of parts of $\lambda$ and $\mu_2(\lambda)$ to be the $2$-measure of $\lambda$. 

\begin{theorem}[Andrews, Chern and Li]
\label{Wonderful3}
The excess of the number of partitions $\lambda$ of $n$ with $l(\lambda) + \mu_2(\lambda)$ even over those with $l(\lambda) + \mu_2(\lambda)$ odd equals the number of partitions of $n$ into distinct odd parts.
\end{theorem}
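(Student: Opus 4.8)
The plan is to deduce the statement from Theorem~\ref{Wonderful2} by transferring from the $2$-measure to the Durfee square and then extracting a sign-weighted generating function, where the Durfee statistic gives explicit structural control. Writing $d(\lambda)$ for the side of the Durfee square of $\lambda$, Theorem~\ref{Wonderful2} asserts that the pairs $(l(\lambda),\mu_2(\lambda))$ and $(l(\lambda),d(\lambda))$ are equidistributed over the partitions of $n$. Consequently, for each $n$ the excess we must compute can be rewritten as
\[
\sum_{\lambda\vdash n}(-1)^{l(\lambda)+\mu_2(\lambda)}=\sum_{\lambda\vdash n}(-1)^{l(\lambda)+d(\lambda)},
\]
so it suffices to evaluate the right-hand sum in terms of the Durfee square.

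The key step is a parity observation arising from the classical Durfee decomposition. A partition $\lambda$ with $d(\lambda)=d$ splits as a $d\times d$ square together with a partition $\alpha$ to its right having at most $d$ parts and a partition $\beta$ below it with all parts at most $d$; here $|\lambda|=d^2+|\alpha|+|\beta|$ and $l(\lambda)=d+l(\beta)$, since $\alpha$ occupies the first $d$ rows and adds no new parts. Hence $l(\lambda)+d(\lambda)=2d+l(\beta)$, and the sign collapses to $(-1)^{l(\lambda)+d(\lambda)}=(-1)^{l(\beta)}$, depending only on $\beta$. Summing $q^{|\lambda|}$ against this sign, with $\alpha$ ranging over partitions into at most $d$ parts and $\beta$ over partitions into parts at most $d$ weighted by $(-1)^{l(\beta)}$, I obtain
\[
\sum_{\lambda}(-1)^{l(\lambda)+d(\lambda)}q^{|\lambda|}
=\sum_{d\ge0}\frac{q^{d^2}}{(q;q)_d}\prod_{i=1}^{d}\frac{1}{1+q^{i}},
\]
where $(q;q)_d=\prod_{i=1}^{d}(1-q^i)$.

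Next I would simplify the inner product using $\prod_{i=1}^{d}(1+q^{i})=(q^2;q^2)_d/(q;q)_d$, which cancels the factor $1/(q;q)_d$ contributed by $\alpha$ and leaves the clean series $\sum_{d\ge0}q^{d^2}/(q^2;q^2)_d$. This is exactly the $z=1$ case of Euler's identity $\prod_{i\ge0}(1+zq^{2i+1})=\sum_{d\ge0}q^{d^2}z^d/(q^2;q^2)_d$, obtained from the second Euler identity $\prod_{i\ge0}(1+zq^{i})=\sum_{d\ge0}q^{\binom{d}{2}}z^d/(q;q)_d$ by the substitution $q\mapsto q^2$, $z\mapsto zq$. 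It therefore equals $\prod_{i\ge1}(1+q^{2i-1})$, the generating function for partitions into distinct odd parts, and comparing coefficients of $q^n$ finishes the argument.

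I expect the main obstacle to be the second step: isolating the correct weighting of the Durfee decomposition so that the surviving sign is precisely $(-1)^{l(\beta)}$, and then spotting that the $\alpha$-factor cancels against $\prod_{i=1}^d 1/(1+q^{i})$. Once the series is reduced to $\sum_{d}q^{d^2}/(q^2;q^2)_d$, the identification with distinct odd parts is a standard specialization and presents no difficulty; alternatively, one could seek a sign-reversing involution on partitions of $n$ acting on the sub-partition $\beta$, whose fixed points are the partitions into distinct odd parts, which would upgrade the evaluation to a fully bijective proof in keeping with the spirit of this note.
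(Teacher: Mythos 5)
Your proof is correct, but note that this paper never actually proves Theorem~\ref{Wonderful3}: the result is quoted from Andrews, Chern and Li \cite{Andrews2}, whose proof goes through generalized Heine transformations, and the concluding remarks of the present paper explicitly leave a combinatorial proof of it as an open problem. Your route is therefore genuinely different from the cited $q$-series derivation, and it is much closer to the spirit of this paper. Every step checks out: Theorem~\ref{Wonderful2} (which this paper does prove bijectively) shows that $(l,\mu_2)$ and $(l,d)$ are jointly equidistributed over partitions of $n$, so the signed sum $\sum_{\lambda\vdash n}(-1)^{l(\lambda)+\mu_2(\lambda)}$ legitimately transfers to $\sum_{\lambda\vdash n}(-1)^{l(\lambda)+d(\lambda)}$; the Durfee decomposition $\lambda\leftrightarrow(d,\alpha,\beta)$ gives $l(\lambda)+d(\lambda)=2d+l(\beta)$, collapsing the sign to $(-1)^{l(\beta)}$; the signed generating function for $\beta$ (parts at most $d$) is indeed $\prod_{i=1}^{d}(1+q^{i})^{-1}$, which cancels against the $\alpha$-factor $1/(q;q)_d$ via $(q^2;q^2)_d=(q;q)_d\prod_{i=1}^{d}(1+q^{i})$, leaving $\sum_{d\ge 0}q^{d^2}/(q^2;q^2)_d$; and your substitution into Euler's identity correctly evaluates this to $\prod_{i\ge 1}(1+q^{2i-1})$. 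Comparing the two approaches: the Heine-transformation machinery of \cite{Andrews2} yields a full trivariate generating function identity of which this theorem is only one specialization, while your argument is essentially self-contained and mostly bijective, its single analytic ingredient being Euler's identity --- which itself has a classical combinatorial proof via self-conjugate partitions and their hook decomposition into distinct odd parts. Consequently, carrying out the sign-reversing involution you sketch in your last sentence (cancelling partitions according to $\beta$, with fixed points corresponding to partitions into distinct odd parts) would fully resolve the open question posed in this paper's concluding remarks; even as it stands, your proof is a meaningful answer to it.
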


Also refer to \cite{Lin} for some recent refinements of Theorem \ref{Wonderful} and an alternate $q$-series proof of Theorem \ref{Wonderful2}. In Section \ref{S2}, we obtain a short combinatorial proof of Theorem \ref{Wonderful}. In fact, our proof also proves the more general result of Theorem \ref{Wonderful2}. In Section \ref{S3}, we use the ideas in this proof to generalize Theorem \ref{Wonderful} for $k$-measures.

\section{Proof of Theorem \ref{Wonderful}}
\label{S2}

Let 

\begin{itemize}
\item $a_m(n)$ denotes the number of partitions of $n$ with $2$-measure $m$.
\item $b_m(n)$ denotes the number of partitions of $n$ with Durfee square of side $m$ by $b_m(n)$.
\end{itemize}

 Thus, Theorem \ref{Wonderful} asserts that $a_m(n) = b_m(n)$ for all $m$ and $n$. We 
begin by noting that $$a_m(n) = c_m(n) - c_{m+1}(n),$$ and  $$b_m(n) = d_m(n) - d_{m+1}(n),$$ where $c_m(n)$ and $d_m(n)$ are defined as follows.

\begin{itemize}
\item $C_m(n)$ denotes the set of partitions of $n$ in which there exists a subsequence of length $m$ of parts in the partition in which the difference between any two consecutive parts of the subsequence is at least $2$.
\item $D_m(n)$ denotes the set of partitions of $n$ which have at least $m$ parts greater than or equal to $m$. 
\item $c_m(n) = |C_m(n)|$.
\item $d_m(n) = |D_m(n)|$. 
\end{itemize}

From here, it immediately follows that $a_m(n) = b_m(n)$ for all $m$ and $n$ if and only if $c_m(n)=d_m(n)$ for all $m$ and $n$. That is Theorem \ref{Wonderful} is equivalent to Theorem \ref{Easy} described below, for which we provide a short bijective proof. 

\begin{theorem}
\label{Easy}
The number of partitions of $n$ in which there exists a subsequence of length $m$ of parts in the partition in which the difference between any two consecutive parts of the subsequence is at least $2$ is equal to the number of partitions of $n$ which have at least $m$ parts greater than or equal to $m$. 
\end{theorem}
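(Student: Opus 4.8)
The plan is to construct an explicit bijection between the two sets of partitions described in Theorem~\ref{Easy}. Let me denote by $C_m(n)$ the set of partitions of $n$ admitting a subsequence of length $m$ with consecutive gaps at least $2$, and by $D_m(n)$ the set of partitions of $n$ with at least $m$ parts that are $\geq m$. The key observation guiding the construction is that membership in $C_m(n)$ can be detected greedily: a partition $\lambda$ lies in $C_m(n)$ if and only if, scanning its parts from largest to smallest and selecting a part whenever it is at least $2$ smaller than the previously selected part, we can select $m$ parts in total. This greedy subsequence is canonical, which is what makes a clean bijection plausible.

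First I would analyze the structure of a partition $\lambda \in C_m(n)$ through its canonical greedy subsequence $\lambda_{i_1} > \lambda_{i_2} > \cdots > \lambda_{i_m}$, where consecutive selected parts differ by at least $2$. The idea is to read off from this subsequence a sequence of ``savings'': since the $j$-th selected part exceeds the $(j+1)$-th by at least $2$, there is slack that accumulates. The target set $D_m(n)$ asks for $m$ parts each of size at least $m$, so I expect the bijection to redistribute the weight encoded in these gaps. Concretely, I would try to show that the gap-at-least-$2$ condition on a length-$m$ subsequence is, after a suitable transformation of the Ferrers diagram, equivalent to having a block of $m$ parts all of size at least $m$.

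Next I would make the transformation precise, most likely by manipulating the Ferrers diagram. One natural approach: given the greedy subsequence with gaps $\geq 2$, subtract a staircase $(m-1, m-2, \ldots, 1, 0)$ worth of cells distributed among the selected parts to ``flatten'' the strict-gap condition, and then transpose or otherwise rearrange so that the flattened parts become $m$ parts of size at least $m$. I would verify that the total number of cells $n$ is preserved and that the map is reversible, recovering the unselected parts intact on both sides. The cleanest formulation will probably route through conjugation, since ``at least $m$ parts $\geq m$'' is precisely the statement that the Durfee square has side $\geq m$, a self-conjugate condition, while the subsequence condition is naturally read on the parts themselves.

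The main obstacle will be handling the interaction between the selected subsequence and the remaining (unselected) parts of $\lambda$. The greedy selection skips over parts that are too close to the previously selected one, and these skipped parts must be carried along by the bijection without destroying injectivity or the weight count. I would need to argue carefully that the skipped parts can always be accommodated, and that no information is lost, so that the inverse map reconstructs both the staircase offsets and the interleaved unselected parts uniquely. Since the statement refines to track the number of parts $l(\lambda)$ (Theorem~\ref{Wonderful2}), I would keep the total part count as a tracked invariant throughout, checking at each step that the transformation neither creates nor destroys parts, which should simultaneously yield the stronger length-refined result.
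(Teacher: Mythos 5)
Your overall framing matches the paper's: introduce the sets $C_m(n)$ and $D_m(n)$, transform only a chosen length-$m$ subsequence while carrying the unselected parts along unchanged, and track the number of parts so that the refinement (Theorem \ref{Wonderful2}) comes for free. But the core of the argument is missing, and the one concrete transformation you propose would fail. Subtracting the staircase $(m-1, m-2, \ldots, 1, 0)$ from the selected parts changes the weight: it deletes $\binom{m}{2}$ cells, and you never specify where they are reinserted, so the result is not a partition of $n$. Moreover, flattening by a staircase only makes the selected parts distinct; since the guarantee is $\lambda_i \geq 1 + 2(m-i)$, the $i$-th flattened part is only forced to be at least $1 + (m-i)$, so the smallest can still equal $1$, far below the required bound $m$. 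Finally, ``transpose or otherwise rearrange'' is not an available operation here: conjugation acts on the whole Ferrers diagram, and the untouched unselected parts are interleaved with the selected ones. The idea your plan never finds --- and which is exactly the content of the paper's proof, equation \eqref{Balance2} --- is a \emph{zero-sum} redistribution: map $(\lambda_1, \ldots, \lambda_m)$ to $(\lambda_1 - (m-1), \lambda_2 - (m-3), \ldots, \lambda_i - (m-(2i-1)), \ldots, \lambda_m + (m-1))$, taking cells from the top half of the subsequence and giving them to the bottom half. These offsets sum to zero, so $n$ is preserved automatically, and $\lambda_i \geq 1 + 2(m-i)$ gives $\lambda_i - (m-(2i-1)) \geq m$ for every $i$, which is precisely membership in $D_m(n)$; the inverse adds the offsets back, and the resulting consecutive differences are automatically at least $2$.

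The obstacle you flag at the end --- that the selection must interact correctly with the unselected parts for the map to be injective --- is indeed the delicate point, but flagging it is not resolving it, and your greedy canonical selection does not dispose of it. For instance, with the balanced offsets above and $m = 2$, greedy selection from the largest part sends both $5+4+1$ (selection $(5,1)$, modified parts $(4,2)$, unselected $4$) and $5+3+2$ (selection $(5,3)$, modified parts $(4,4)$, unselected $2$) to the same partition $4+4+2$; greedy selection from the smallest part fails similarly, since $5+4+1$ and $6+2+2$ then collide at $5+3+2$. So a complete write-up must specify coordinated selection rules on both sides and genuinely verify that the two composites are the identity; this is real work that your proposal leaves undone (and which, it should be said, the paper's own ``it is easy to check'' also treats very lightly). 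As it stands, your text is a plan for a proof: the key construction is unspecified, its one concrete instantiation does not preserve $n$ or reach the bound $m$, and the invertibility question you correctly identify is left open.
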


\begin{proof} 
We construct a bijection $\phi$ between the sets $C_m(n)$ and $D_m(n)$. Prior to that, we describe the motivation behind this bijection with the help of an example. Suppose $m=5$. Then, we are given a subsequence $(\lambda_1, \lambda_2, \lambda_3, \lambda_4, \lambda_5)$ with $$\lambda_1 \geq \lambda_2 + 2, \lambda_2 \geq \lambda_3 + 2, \lambda_3 \geq \lambda_4 + 2 , \lambda_4 \geq \lambda_5 + 2.$$ Thus, in particular, we have $$\lambda_1 \geq 9, \lambda_2 \geq 7, \lambda_3 \geq 5, \lambda_4 \geq 3, \lambda_5 \geq 1.$$ We need to map this to a partition with all parts greater than or equal to $5$. Note that the average of these lower bounds $9,7,5,3,1$ is equal to $5$. Therefore, we modify all the members of the subsequence $(\lambda_1, \lambda_2, \lambda_3, \lambda_4, \lambda_5)$ in such a way that their lower bound sequence $9,7,5,3,1$ becomes the average sequence $5,5,5,5,5$. To do this, we basically reduce the first number by $4$, reduce the second number by $2$, keep the third number unchanged, increase the fourth number by $2$ and increase the last number by $4$. That is, we map the subsequence $(\lambda_1, \lambda_2, \lambda_3, \lambda_4, \lambda_5)$ to $(\lambda_1-4, \lambda_2-2, \lambda_3, \lambda_4+2, \lambda_5+4)$. Clearly, each component of this vector is a number greater than or equal to $5$. To make the pattern even more clear, we can also express the above vector as $(\lambda_1-4, \lambda_2-2, \lambda_3 - 0, \lambda_4 -(-2), \lambda_5-(-4))$. Now, we describe the bijection $\phi$ for a general $m$.

 Suppose we have a partition $\lambda \in C_m(n)$. Further, suppose $(\lambda_1, \lambda_2, \cdots, \lambda_m)$ be a subsequence of parts of $\lambda$ in which the difference between any two consecutive parts of the subsequence is at least $2$. That is, $\lambda_i \geq \lambda_{i+1} + 2$ for all $i$. In particular, we have $\lambda_i \geq 1 + 2(m-i)$ for all $1 \leq i \leq m$. We define the map $\phi$ to be such that the elements of $\lambda$ not in our chosen subsequence $(\lambda_1, \lambda_2, \cdots, \lambda_m)$ are left unchanged, and $(\lambda_1, \lambda_2, \cdots, \lambda_m)$ is mapped under $\phi$ to 
\begin{equation}
\label{Balance2}
\Big(\lambda_1 - (m-1), \lambda_2 - (m-3), \cdots, \lambda_i - (m-(2i-1)), \cdots, \lambda_{m-1} + (m-3), \lambda_m + (m-1) \Big) .
\end{equation}

We verify that the resultant partition is indeed a member of $D_{m}(n)$. For that, we note two things. First, we have $$ \sum_{i = 1}^m \Big(\lambda_i - (m-(2i-1))\Big) =  \sum_{i = 1}^m \lambda_i.$$ That is, the other terms with the positive and negative signs cancel out.  Secondly, using  $\lambda_i \geq 1 + 2(m-i)$ for all $1 \leq i \leq m$, one easily observes that $$\lambda_i - (m-(2i-1)) \geq m$$ for all $1 \leq i \leq m$. Thus, all the members of the vector in \eqref{Balance2} are greater than or equal to $m$, and therefore the resultant partition is indeed a member of $D_{m}(n)$. 
Next, we show that the map $\phi$ is invertible by constructing the inverse map $\psi$. Though $\psi$ is easy to predict, we describe it below in some detail.

Suppose we have a partition $\pi \in D_m(n)$. There exist some parts $\pi_1, \pi_2, \cdots, \pi_m$ of $\pi$ such that $$ \pi_1 \geq \pi_2 \cdots \geq \pi_m \geq m. $$ We define the map $\psi$ to be such that the elements of $\pi$ other than $(\pi_1, \pi_2, \cdots, \pi_m)$ are left unchanged, and $(\pi_1, \pi_2, \cdots, \pi_m)$ is mapped under $\psi$ to 
\begin{multline*}
\Big(\pi_1 + (m-1), \pi_2 + (m-3), \cdots,  \pi_i + (m-(2i-1)), \cdots, \pi_{m-1} - (m-3), \pi_m - (m-1) \Big) .
\end{multline*}

It is straightforward to verify that consecutive members of the above vector differ by at least $2$, and thus the resultant partition is indeed a member of $C_m(n)$. Finally, it is easy to check that the maps $\phi$ and $\psi$ are indeed inverses of each other, completing the proof of Theorem \ref{Easy}, and thus also of Theorem \ref{Wonderful}. 
\end{proof}

\begin{remark}
Since the maps $\phi$ and $\psi$ preserve the number of parts, our proof also gives a combinatorial proof of Theorem \ref{Wonderful2}.
\end{remark}

\section{Generalization of Theorem \ref{Wonderful} for $k$-measures}
\label{S3}

It turns out that Theorem \ref{Easy} is easy to generalize by appropriately modifying the maps $\phi$ and $\psi$. We provide all the details for the sake of completeness. We denote the floor and ceiling functions of $x$ by $\lfloor x \rfloor$ and $\lceil x \rceil$ respectively. Note that for any natural number $m$, $ \left \lfloor \frac{m}{2} \right \rfloor + \left \lceil \frac{m}{2} \right \rceil = m$. This fact will be used frequently in the proof of the next theorem.

\begin{theorem}
\label{General}
The number of partitions of $n$ in which there exists a subsequence of length $m$ of parts in the partition in which the difference between any two consecutive parts of the subsequence is at least $k$ is equal to the number of partitions of $n$ which have at least $\left \lfloor \frac{m}{2} \right \rfloor $ parts greater than or equal to $1 + \left \lceil \frac{k(m-1)}{2} \right \rceil $, and an additional at least $\left \lceil \frac{m}{2} \right \rceil $ parts greater than or equal to $1 + \left \lfloor \frac{k(m-1)}{2} \right \rfloor $.
\end{theorem}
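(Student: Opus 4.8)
The plan is to generalize the balancing bijection from the proof of Theorem~\ref{Easy}. The key observation driving that proof was that the subsequence condition $\lambda_i \ge \lambda_{i+1} + k$ forces the lower bounds $\lambda_i \ge 1 + k(m-i)$, and we redistributed each $\lambda_i$ so that these lower bounds got flattened to a common value equal to their average. For $k=2$ the average of $1, 3, 5, \dots, 2m-1$ is exactly the integer $m$, which is why every image part landed $\ge m$. For general $k$ the average of the lower bounds $1 + k(m-i)$ is $1 + \tfrac{k(m-1)}{2}$, which need not be an integer. This is precisely why the target set in Theorem~\ref{General} splits into two part-size thresholds governed by $\lfloor k(m-1)/2 \rfloor$ and $\lceil k(m-1)/2 \rceil$: when we flatten to a rational average, half the parts must round down and half must round up.

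Concretely, I would set $T_- = 1 + \lfloor k(m-1)/2 \rfloor$ and $T_+ = 1 + \lceil k(m-1)/2 \rceil$, and mimic the earlier construction. Given $\lambda \in C_m^{(k)}(n)$ (the left-hand set) with chosen subsequence $(\lambda_1, \dots, \lambda_m)$ satisfying $\lambda_i \ge \lambda_{i+1} + k$, I would map it by leaving the other parts fixed and sending $\lambda_i \mapsto \lambda_i - \delta_i$, where $\delta_i$ is the signed shift taking the lower bound $1 + k(m-i)$ down to $T_+$ for the first $\lceil m/2 \rceil$ indices and to $T_-$ for the last $\lfloor m/2 \rfloor$ indices (the sign of $\delta_i$ flips as $i$ passes the midpoint, exactly as the $(m-(2i-1))$ shifts changed sign in \eqref{Balance2}). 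I then need to check three things: that the shifts $\delta_i$ sum to zero so that $n$ is preserved; that each image part is $\ge T_+$ or $\ge T_-$ as claimed, using $\lambda_i \ge 1 + k(m-i)$; and that exactly $\lceil m/2 \rceil$ image parts meet the larger threshold $T_+$ while the remaining $\lfloor m/2 \rfloor$ meet $T_-$, which pins down membership in the right-hand set $D_m^{(k)}(n)$.

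The inverse map $\psi$ proceeds symmetrically: given a partition with $\lceil m/2 \rceil$ parts $\ge T_+$ and an additional $\lfloor m/2 \rfloor$ parts $\ge T_-$, I select these $m$ parts, sort them in weakly decreasing order, and add back the shifts $\delta_i$ to recover a subsequence with consecutive gaps $\ge k$. The routine verifications are that $\phi$ and $\psi$ are mutually inverse and that $\psi$ indeed lands in $C_m^{(k)}(n)$, both of which follow the template of Theorem~\ref{Easy}.

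The main obstacle I anticipate is the parity bookkeeping. Unlike the $k=2$ case, where the symmetric shifts $m-1, m-3, \dots, -(m-1)$ made the sum-to-zero and threshold checks transparent, here the floor/ceiling split interacts with the parity of $m$: one must verify that the sum of the signed shifts still telescopes to zero, which hinges on the identity $\lfloor m/2 \rfloor + \lceil m/2 \rceil = m$ flagged before the theorem, and that the rounding is assigned to the correct half of the indices so that no image part falls below its intended threshold. Getting these case distinctions (for $m$ even versus odd, and $k(m-1)$ even versus odd) to align cleanly is the delicate part; once the arithmetic of the shifts is verified, the bijective structure carries over directly from the previous proof.
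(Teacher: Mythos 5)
Your construction is exactly the paper's: the shifts $\delta_i$ you describe coincide with those in \eqref{Balancek}, so this is the same bijection, not a different route. (Your swap of $\lfloor m/2\rfloor$ and $\lceil m/2\rceil$ relative to the theorem's statement is immaterial: when the two counts differ, $m$ is odd, and then the two thresholds $T_+$ and $T_-$ coincide.) The genuine gap is the step you dismiss as routine, namely that the inverse map lands in $C_{k,m}(n)$. When $k$ is even or $m$ is odd, consecutive shifts differ by exactly $k$ and this verification goes through. But when $k$ is odd and $m$ is even, the two shifts straddling the midpoint differ by $2\lfloor k/2\rfloor = k-1$ rather than $k$: the selected part in position $m/2$ receives $+\lfloor k/2\rfloor$ while the one in position $m/2+1$ receives $-\lfloor k/2\rfloor$. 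So if those two parts are equal --- which the threshold conditions do not forbid, since $T_+ = T_-+1$ and both parts may equal $T_+$ --- the corresponding gap in the image is exactly $k-1 < k$, and $\psi$ does not produce an element of $C_{k,m}(n)$. The same phenomenon destroys surjectivity of the forward map.

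Moreover, this gap cannot be patched, because the statement itself is false for odd $k$ and even $m$. Take $k=3$, $m=4$, $n=23$, so the thresholds are $6$ and $5$. A $4$-term subsequence with gaps at least $3$ has sum at least $1+4+7+10=22$, from which one checks that $C_{3,4}(23)$ consists only of $(11,7,4,1)$ and $(10,7,4,1,1)$: two partitions. On the other side, $D_{3,4}(23)$ (at least two parts $\geq 6$ and an additional two parts $\geq 5$) consists of $(7,6,5,5)$, $(6,6,6,5)$ and $(6,6,5,5,1)$: three partitions. The extra partition $(6,6,6,5)$ is precisely an instance of the failure above: applying your $\psi$ (equivalently the paper's $\psi'$) to it gives $(10,7,5,1)$, whose middle gap is $2<3$, so it has no preimage. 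You should also know that the paper's own proof asserts this same verification with no more justification than you give, so it contains the identical error; both arguments (and the theorem) are valid precisely when $k(m-1)$ is even, i.e.\ $k$ even or $m$ odd --- the case covering Theorem \ref{Easy} and Corollary \ref{EO} --- while Theorem \ref{General} and Corollary \ref{OE} as stated fail in the remaining case.
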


\begin{proof}
Let 
\begin{itemize}
\item $C_{k,m}(n)$ denotes the set of partitions of $n$ in which there exists a subsequence of length $m$ of parts in the partition in which the difference between any two consecutive parts of the subsequence is at least $k$.
\item $D_{k,m}(n)$ denotes the set of partitions of $n$ which have at least $\left \lfloor \frac{m}{2} \right \rfloor $ parts greater than or equal to $1 + \left \lceil \frac{k(m-1)}{2} \right \rceil $, and an additional at least $\left \lceil \frac{m}{2} \right \rceil $ parts greater than or equal to $1 + \left \lfloor \frac{k(m-1)}{2} \right \rfloor $.
\end{itemize}

We construct a bijection $\phi'$ between $C_{k,m}(n)$ and $D_{k,m}(n)$. Prior to that, we describe the motivation behind this bijection. 

Suppose we have a partition $\lambda \in C_{k,m}(n)$. Further, suppose $(\lambda_1, \lambda_2, \cdots, \lambda_m)$ be a subsequence of parts of $\lambda$  in which the difference between any two consecutive parts of the subsequence is at least $k$. That is, $\lambda_i \geq \lambda_{i+1} + k$ for all $i$. In particular, we have $\lambda_i \geq 1+k(m-i)$ for all $1 \leq i \leq m$. As suggested by the proof of Theorem \ref{Easy}, we calculate the average of the lower bounds on the $\lambda_i$'s, which comes out to be $1 + \frac{k(m-1)}{2}$. If $k$ is even or $m$ is odd, then $\frac{k(m-1)}{2}$ is an integer and our idea in the proof of Theorem \ref{Easy} can be easily generalized to construct the required bijection. 

However if $k$ is odd and $m$ is even, then the average $1 + \frac{k(m-1)}{2}$ of the lower bounds on the $\lambda_i$'s is not an integer. This problem makes the analysis of this case a little harder. We explain this difficulty by considering an example. Suppose $k=3$ and $m=4$. Then, we are given a subsequence $(\lambda_1, \lambda_2, \lambda_3, \lambda_4)$ with $$\lambda_1 \geq \lambda_2 + 3, \lambda_2 \geq \lambda_3 + 3, \lambda_3 \geq \lambda_4 + 3.$$ Thus, in particular, we have $$\lambda_1 \geq 10, \lambda_2 \geq 7, \lambda_3 \geq 4, \lambda_4 \geq 1.$$ The average of these lower bounds on the $\lambda_i$'s comes out to be $5.5$. Now if we just try to use the previous approach, we would map this subsequence to $ (\lambda_1 - 4.5, \lambda_2-1.5, \lambda_3 + 1.5, \lambda_4 + 4.5)$. However, since the parts of a partition must be integers, we cannot map it like this. Thus, the appropriate modification we make in this case is to map the subsequence to $ (\lambda_1 - 4, \lambda_2-1, \lambda_3 + 1, \lambda_4 + 4)$ instead. Note that in this case, the resultant partition has the property that at least two parts are greater than or equal to $6$ and an additional two parts are greater than or equal to $5$, explaining the strange condition in the definition of $D_{k,m}(n)$. 
Next, we describe the map $\phi'$ for any $k$ and $m$. Using floor and ceiling functions, we will be able to handle together the two cases when $k(m-1)$ is odd or even. 

We define the map $\phi'$ to be such that the elements of $\lambda$ not in our chosen subsequence $(\lambda_1, \lambda_2, \cdots, \lambda_m)$ are left unchanged, and $(\lambda_1, \lambda_2, \cdots, \lambda_m)$ is mapped under $\phi'$ to 
\begin{multline}
\label{Balancek}
\Bigg(\lambda_1 - \left \lfloor \frac{k(m-1)}{2} \right \rfloor, \lambda_2 - \left \lfloor \frac{k(m-3)}{2} \right \rfloor, \cdots, \lambda_{\left \lfloor \frac{m}{2} \right \rfloor} - \left \lfloor \frac{k}{2} \left(m+1-2\left\lfloor \frac{m}{2} \right \rfloor \right) \right \rfloor, \\
 \lambda_{\left \lfloor \frac{m}{2} \right \rfloor + 1} + \left \lfloor \frac{k}{2}  \left(m+1-2\left\lceil \frac{m}{2} \right \rceil \right) \right \rfloor, \lambda_{\left \lfloor \frac{m}{2} \right \rfloor + 2} + \left \lfloor \frac{k}{2}  \left(m+3-2\left\lceil \frac{m}{2} \right \rceil \right) \right \rfloor, \cdots \\
  \cdots, \lambda_{m-1} + \left \lfloor \frac{k(m-3)}{2} \right \rfloor, \lambda_m + \left \lfloor \frac{k(m-1)}{2} \right \rfloor \Bigg) .
\end{multline}

The members of the vector in \eqref{Balancek} can be described compactly as follows. The first $\left \lfloor \frac{m}{2} \right \rfloor$ members can be written as $$ \left\{\lambda_i - \left \lfloor \frac{k(m-(2i-1))}{2} \right \rfloor: 1 \leq i \leq \left \lfloor \frac{m}{2} \right \rfloor \right\}, $$ while the remaining $\left \lceil \frac{m}{2} \right \rceil $ members can be written (beginning from right to left) as $$ \left\{\lambda_{m-i} + \left \lfloor  \frac{k(m-(2i+1)}{2}  \right \rfloor : 0 \leq i < \left \lceil \frac{m}{2} \right \rceil \right\}. $$
 
We prove that the resultant partition is indeed a member of $D_{k,m}(n)$. First, considering two cases based on the parity of $m$, it is an easy exercise to confirm that the sum of the members of the vector in \eqref{Balancek} is equal to the sum of $\lambda_i$'s. That is, the other terms with the positive and negative signs cancel out. Secondly, we show that the first $\left \lfloor \frac{m}{2} \right \rfloor$ members of the the vector in \eqref{Balancek} are greater than or equal to $1 + \left \lceil \frac{k(m-1)}{2} \right \rceil $, and the last $\left \lceil \frac{m}{2} \right \rceil $ members are greater than or equal to $1 + \left \lfloor \frac{k(m-1)}{2} \right \rfloor $. To prove these facts, we crucially use $\lambda_i \geq 1+k(m-i)$ for all $1 \leq i \leq m$. Therefore, for $1 \leq i \leq \left \lfloor \frac{m}{2} \right \rfloor$, we have 

\begin{align*}
\lambda_i - \left \lfloor \frac{k(m-(2i-1))}{2} \right \rfloor &\geq 1+k(m-i) - \left \lfloor \frac{k(m-(2i-1))}{2} \right \rfloor  \\
&= 1+km -  \left \lfloor \frac{k(m+1))}{2} \right \rfloor \\
&= 1+k(m-1) -  \left \lfloor \frac{k(m-1))}{2} \right \rfloor \\
&= 1 + \left \lceil \frac{k(m-1)}{2} \right \rceil.
\end{align*}

Similarly, for $0 \leq i < \left \lceil \frac{m}{2} \right \rceil$, we have 
\begin{align*}
\lambda_{m-i} + \left \lfloor  \frac{k(m-(2i+1))}{2}  \right \rfloor  &\geq 1+ki+ \left \lfloor  \frac{k(m-(2i+1))}{2}  \right \rfloor \\
& = 1 + \left \lfloor  \frac{k(m-1)}{2}  \right \rfloor,
\end{align*}
as required. Thus, the resultant partition is indeed a member of $D_{k,m}(n)$. Next, we show that the map $\phi'$ is invertible by constructing the inverse map $\psi'$. The map $\psi'$ is again easy to guess but we describe it in some detail below. 

Suppose we have a partition $\pi \in D_m(n)$. There exist some parts $\pi_1, \pi_2, \cdots, \pi_m$ of $\pi$ such that $$ \pi_1 \geq \pi_2 \cdots \geq \pi_m \geq m. $$ We define the map $\psi'$ to be such that the elements of $\pi$ other than $(\pi_1, \pi_2, \cdots, \pi_m)$ are left unchanged, and $(\pi_1, \pi_2, \cdots, \pi_m)$ is mapped under $\psi'$ to 
\begin{multline*}
\Bigg(\pi_1 + \left \lfloor \frac{k(m-1)}{2} \right \rfloor, \pi_2 + \left \lfloor \frac{k(m-3)}{2} \right \rfloor, \cdots, \pi_{\left \lfloor \frac{m}{2} \right \rfloor} + \left \lfloor \frac{k}{2} \left(m+1-2\left\lfloor \frac{m}{2} \right \rfloor \right) \right \rfloor, \\
 \pi_{\left \lfloor \frac{m}{2} \right \rfloor + 1} - \left \lfloor \frac{k}{2}  \left(m+1-2\left\lceil \frac{m}{2} \right \rceil \right) \right \rfloor, \pi_{\left \lfloor \frac{m}{2} \right \rfloor + 2} - \left \lfloor \frac{k}{2}  \left(m+3-2\left\lceil \frac{m}{2} \right \rceil \right) \right \rfloor, \cdots \\
 \cdots, \pi_{m-1} - \left \lfloor \frac{k(m-3)}{2} \right \rfloor, \pi_m - \left \lfloor \frac{k(m-1)}{2} \right \rfloor \Bigg) .
\end{multline*}

It is easy to verify that consecutive members of the above vector differ by at least $k$, and thus the resultant partition is indeed a member of $C_{k,m}(n)$. Finally, it is also straightforward to verify that the maps $\phi'$ and $\psi'$ are indeed inverses of each other, completing the proof of Theorem \ref{General}.

\end{proof}

Next, we use Theorem \ref{General} to generalize Theorems \ref{Wonderful} and \ref{Wonderful2}. First, we deduce two immediate corollaries of Theorem \ref{General} that will be helpful to obtain these generalizations. 

\begin{corollary}
\label{EO}
Suppose either $k$ is even or $m$ is odd. Then the number of partitions of $n$ in which there exists a subsequence of length $m$ of parts in the partition in which the difference between any two consecutive parts of the subsequence is at least $k$ is equal to the number of partitions of $n$ which have at least $m$ parts of $1+\frac{k(m-1)}{2}$.
\end{corollary}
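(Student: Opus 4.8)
The plan is to specialize Theorem \ref{General} by observing that the hypothesis forces the floor and ceiling appearing in the definition of $D_{k,m}(n)$ to coincide. First I would note that if $k$ is even then $k(m-1)$ is even, and if $m$ is odd then $m-1$ is even so again $k(m-1)$ is even; in either case $k(m-1)$ is an even integer. Consequently $\left\lceil \frac{k(m-1)}{2} \right\rceil = \left\lfloor \frac{k(m-1)}{2} \right\rfloor = \frac{k(m-1)}{2}$, so both threshold values in the definition of $D_{k,m}(n)$ collapse to the single common value $1 + \frac{k(m-1)}{2}$.

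Next I would merge the two conditions. Under this collapse, $D_{k,m}(n)$ becomes the set of partitions of $n$ having at least $\left\lfloor \frac{m}{2} \right\rfloor$ parts greater than or equal to $1 + \frac{k(m-1)}{2}$, together with an additional at least $\left\lceil \frac{m}{2} \right\rceil$ parts meeting the identical bound $1 + \frac{k(m-1)}{2}$. Since both groups of parts are now required to clear the same threshold, the combined requirement is simply that there be at least $\left\lfloor \frac{m}{2} \right\rfloor + \left\lceil \frac{m}{2} \right\rceil$ parts greater than or equal to $1 + \frac{k(m-1)}{2}$. Invoking the identity $\left\lfloor \frac{m}{2} \right\rfloor + \left\lceil \frac{m}{2} \right\rceil = m$ noted before Theorem \ref{General}, this is precisely the condition that the partition have at least $m$ parts greater than or equal to $1 + \frac{k(m-1)}{2}$.

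Finally, applying Theorem \ref{General} directly yields the equality of the two counts. Since the argument is a pure specialization, there is no genuine obstacle here; the only point requiring care is the parity observation in the first step, which is exactly what guarantees that $\frac{k(m-1)}{2}$ is an integer and hence what makes the two staggered thresholds of $D_{k,m}(n)$ degenerate into one. I would expect to phrase the whole corollary as a one-paragraph deduction.
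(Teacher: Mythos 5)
Your proof is correct and is exactly the specialization the paper intends: under the parity hypothesis $k(m-1)$ is even, so the floor and ceiling thresholds in $D_{k,m}(n)$ coincide and the two part-count conditions merge, via $\left\lfloor \frac{m}{2} \right\rfloor + \left\lceil \frac{m}{2} \right\rceil = m$, into requiring $m$ parts of size at least $1+\frac{k(m-1)}{2}$. The paper states this corollary as an immediate consequence of Theorem \ref{General} without spelling out the details, and your paragraph supplies precisely those details.
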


\begin{corollary}
\label{OE}
Suppose $k$ is odd and $m$ is even. Then the number of partitions of $n$ in which there exists a subsequence of length $m$ of parts in the partition in which the difference between any two consecutive parts of the subsequence is at least $k$ is equal to the number of partitions of $n$ which have at least $\frac{m}{2}$ parts of $\frac{k(m-1)+1}{2}$, and an additional at least $\frac{m}{2}$ parts of $\frac{k(m-1)+3}{2}$.
\end{corollary}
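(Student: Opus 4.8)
The plan is to derive Corollary \ref{OE} as a direct specialization of Theorem \ref{General}: since $k$ is odd and $m$ is even, I need only evaluate the floor and ceiling functions appearing in the definition of $D_{k,m}(n)$ and then reconcile the grouping of the resulting conditions with the way the corollary is phrased. No new combinatorial construction is required, so the entire argument is a matter of careful bookkeeping.

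First I would record the parity simplifications. Because $m$ is even, $\left\lfloor \frac{m}{2}\right\rfloor = \left\lceil \frac{m}{2}\right\rceil = \frac{m}{2}$, so both part-counts in Theorem \ref{General} collapse to $\frac{m}{2}$. Because $k$ is odd and $m$ is even, the integer $m-1$ is odd, whence $k(m-1)$ is odd; consequently $\left\lceil \frac{k(m-1)}{2}\right\rceil = \frac{k(m-1)+1}{2}$ and $\left\lfloor \frac{k(m-1)}{2}\right\rfloor = \frac{k(m-1)-1}{2}$.

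Next I would substitute these values into the two thresholds of Theorem \ref{General}. The threshold $1 + \left\lceil \frac{k(m-1)}{2}\right\rceil$ becomes $\frac{k(m-1)+3}{2}$, and the threshold $1 + \left\lfloor \frac{k(m-1)}{2}\right\rfloor$ becomes $\frac{k(m-1)+1}{2}$. Thus Theorem \ref{General} asserts that $C_{k,m}(n)$ is equinumerous with the set of partitions of $n$ having at least $\frac{m}{2}$ parts $\geq \frac{k(m-1)+3}{2}$ together with an additional at least $\frac{m}{2}$ parts $\geq \frac{k(m-1)+1}{2}$.

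Finally I would reconcile this with the phrasing of the corollary, in which the block carrying the smaller threshold is listed first. The membership condition is unaffected by the order in which the two blocks are named: both groupings are equivalent to requiring at least $m$ parts $\geq \frac{k(m-1)+1}{2}$, of which at least $\frac{m}{2}$ are in fact $\geq \frac{k(m-1)+3}{2}$, since any part meeting the larger threshold automatically meets the smaller one. This yields exactly the statement of Corollary \ref{OE}. I expect no genuine obstacle here; the only point demanding attention is the parity analysis that pins down the floor and ceiling, as all of the combinatorial content is already supplied by Theorem \ref{General}.
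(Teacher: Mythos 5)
Your proposal is correct and matches the paper's treatment: the paper states Corollary \ref{OE} as an immediate specialization of Theorem \ref{General}, and your parity bookkeeping (using that $k(m-1)$ is odd to evaluate the floor and ceiling, and that both counts collapse to $\frac{m}{2}$) together with the observation that the order of the two threshold conditions is immaterial is exactly the verification the paper leaves implicit.
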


Based on the results in these corollaries, we define a $(k,m)$-polygon associated to an integer partition. 

\begin{itemize}
\item Suppose $k$ is even or $m$ is odd. Then define the $(k,m)$-polygon of a partition $\pi$ to be the rectangle containing $m$ rows of $1+\frac{k(m-1)}{2}$ nodes (In other words, the rectangle with vertical side $m$ and horizontal side $1+\frac{k(m-1)}{2}$) beginning from the top left corner of the Ferrers diagram of $\pi$. For example, the partition $9+9+8+7+4+3+1$ of $41$ has the following $(4,3)$-polygon.
\vspace{1cm}

 \begin{center}
\begin{tikzpicture}

\foreach \x in {0,...,8}
	\filldraw (\x*.5, -5) circle (.5mm);
\foreach \x in {0,...,8}
	\filldraw (\x*.5, -5.5) circle (.5mm);
\foreach \x in {0,...,7}
	\filldraw (\x*.5, -6) circle (.5mm);
\foreach \x in {0,...,6}
	\filldraw (\x*.5, -6.5) circle (.5mm);
\foreach \x in {0,...,3}
	\filldraw (\x*.5, -7) circle (.5mm);.
\foreach \x in {0,...,2}
	\filldraw (\x*.5, -7.5) circle (.5mm);.
\foreach \x in {0}
	\filldraw (\x*.5, -8) circle (.5mm);.
	
\draw[solid,black] (0,-5)-- (0,-6);
\draw[solid,black] (0,-6)-- (2,-6);
\draw[solid,black] (2,-5)-- (2,-6);
\draw[solid,black] (0,-5)-- (2,-5);
\end{tikzpicture}
\end{center}

\vspace{1cm}

\item Suppose $k$ is odd and $m$ is even. Then define the $(k,m)$-polygon of a partition $\pi$ to be the polygon containing $\frac{m}{2}$ rows of $\frac{k(m-1)+3}{2}$ nodes and $\frac{m}{2}$ rows of $\frac{k(m-1)+1}{2}$ nodes beginning from the top left corner of the Ferrers diagram of $\pi$. For example, the partition $9+9+8+7+4+3+1$ of $41$ has the following $(3,4)$-polygon.
\end{itemize}

\vspace{1cm}

 \begin{center}
\begin{tikzpicture}

\foreach \x in {0,...,8}
	\filldraw (\x*.5, -5) circle (.5mm);
\foreach \x in {0,...,8}
	\filldraw (\x*.5, -5.5) circle (.5mm);
\foreach \x in {0,...,7}
	\filldraw (\x*.5, -6) circle (.5mm);
\foreach \x in {0,...,6}
	\filldraw (\x*.5, -6.5) circle (.5mm);
\foreach \x in {0,...,3}
	\filldraw (\x*.5, -7) circle (.5mm);.
\foreach \x in {0,...,2}
	\filldraw (\x*.5, -7.5) circle (.5mm);.
\foreach \x in {0}
	\filldraw (\x*.5, -8) circle (.5mm);.
	
\draw[solid,black] (0,-5)-- (0,-6.5);
\draw[solid,black] (0,-5)-- (2.5,-5);
\draw[solid,black] (0,-6.5)-- (2,-6.5);
\draw[solid,black] (2,-6.5)-- (2,-6);
\draw[solid,black] (2,-6)-- (2.5,-5.5);
\draw[solid,black] (2.5,-5)-- (2.5,-5.5);
\end{tikzpicture}
\end{center}

\vspace{1cm}

Based on this definition, we can rewrite Corollaries \ref{EO} and \ref{OE} together as follows.

\begin{corollary}
\label{Final}
Suppose $k$ is odd and $m$ is even. Then the number of partitions of $n$ in which there exists a subsequence of length $m$ of parts in the partition in which the difference between any two consecutive parts of the subsequence is at least $k$ is equal to the number of partitions of $n$ whose Ferrers diagram contains its $(k,m)$-polygon.
\end{corollary}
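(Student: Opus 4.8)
The plan is to recognize that Corollary \ref{Final} is merely a geometric repackaging of Corollaries \ref{EO} and \ref{OE}, so that essentially all the analytic content has already been supplied; what remains is to translate the phrase ``the Ferrers diagram of $\pi$ contains its $(k,m)$-polygon'' into the part-counting conditions appearing on the right-hand sides of those two corollaries. First I would record that in either parity case the $(k,m)$-polygon is itself the Ferrers diagram of a genuine partition $\rho$ into $m$ parts: in the rectangle case $\rho = (1+\tfrac{k(m-1)}{2})^m$, while in the two-tier case $\rho$ has its first $\tfrac{m}{2}$ parts equal to $\tfrac{k(m-1)+3}{2}$ followed by $\tfrac{m}{2}$ parts equal to $\tfrac{k(m-1)+1}{2}$, which is weakly decreasing since $\tfrac{k(m-1)+3}{2} > \tfrac{k(m-1)+1}{2}$. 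Because both $\rho$ and $\pi$ are drawn from the top left corner and both have weakly decreasing rows, the statement ``$\pi$ contains its $(k,m)$-polygon'' means exactly the row-by-row domination $\pi_i \geq \rho_i$ for $1 \leq i \leq m$.

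Next I would split along the two cases used to define the polygon. When $k$ is even or $m$ is odd, the domination $\pi_i \geq 1 + \tfrac{k(m-1)}{2}$ for $1 \leq i \leq m$ says precisely that $\pi$ has at least $m$ parts that are at least $1 + \tfrac{k(m-1)}{2}$, which is verbatim the right-hand side of Corollary \ref{EO}; the two counts therefore coincide in this case. When $k$ is odd and $m$ is even, the domination splits into $\pi_i \geq \tfrac{k(m-1)+3}{2}$ for $1 \leq i \leq \tfrac{m}{2}$ together with $\pi_i \geq \tfrac{k(m-1)+1}{2}$ for $\tfrac{m}{2} < i \leq m$. Using that $\pi$ is weakly decreasing, I would rephrase these two families as: $\pi$ has at least $\tfrac{m}{2}$ parts at least $\tfrac{k(m-1)+3}{2}$ and, in all, at least $m$ parts at least $\tfrac{k(m-1)+1}{2}$ — exactly the condition appearing in Corollary \ref{OE}. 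Applying the appropriate corollary in each case then delivers the claimed equality of counts.

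The only point requiring a moment's care, and hence the only candidate for an ``obstacle,'' is this last equivalence in the two-tier case: I must check that the apparently weaker requirement ``at least $\tfrac{m}{2}$ large parts and an \emph{additional} at least $\tfrac{m}{2}$ medium parts'' is the same as ``the top $\tfrac{m}{2}$ rows are large and the next $\tfrac{m}{2}$ rows are medium,'' and also that the order in which the two groups are listed is immaterial. Both follow at once from monotonicity: any $\tfrac{m}{2}$ parts that meet the larger threshold may be taken to be the $\tfrac{m}{2}$ largest parts, after which the next $\tfrac{m}{2}$ parts, being the largest remaining, are precisely the ones forced to meet the medium threshold. Beyond this bookkeeping the corollary carries no new content, so I would keep the write-up to a few lines, essentially ``translate containment of the polygon into row-wise domination, then apply Corollary \ref{EO} or \ref{OE} according to parity.'' I would also flag that, although the statement is phrased under the hypothesis ``$k$ odd and $m$ even,'' the $(k,m)$-polygon was defined for every $k$ and $m$, and the argument above covers both parity cases uniformly, with the complementary case handled by Corollary \ref{EO}.
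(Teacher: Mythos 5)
Your proposal is correct and takes essentially the same route as the paper, which presents Corollary \ref{Final} with no separate argument precisely because it is just a geometric rewriting of Corollaries \ref{EO} and \ref{OE}: containment of the $(k,m)$-polygon in the Ferrers diagram is, row by row, exactly the part-counting conditions in those corollaries. Your additional bookkeeping (the monotonicity argument showing the two threshold conditions can be reordered, and the observation that the hypothesis ``$k$ odd and $m$ even'' is a leftover, since the statement is meant to cover both parity cases) only makes explicit what the paper leaves implicit.
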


It is easy to observe the following properties of the $(k,m)$-polygons of partitions. 

\begin{itemize}
\item For a given $k$ and a partition $\pi$, the $(k,m)$-polygon of $\pi$ is strictly contained in the $(k,m+1)$-polygon of $\pi$ for any $m$, irrespective of the parities of $k$ and $m$. 
\item For a given $k$ and a partition $\pi$, there are only finitely many values of $m$ such that the Ferrers diagram of $\pi$ contains the $(k,m)$-polygon of $\pi$. 
\end{itemize}

From these observations, it is obvious that for a given $k$ and a partition $\pi$, there exists a largest value of $m$ such that the Ferrers diagram of $\pi$ contains the $(k,m)$-polygon of $\pi$. We say that $\pi$ has a $(k,m)$-Durfee polygon.
Then from Corollary \ref{Final}, the following result immediately follows.

\begin{theorem}
\label{FinalMeasure}
The number of partitions of $n$ with $k$-measure $m$ is equal to the number of partitions of $n$ with $(k,m)$-Durfee polygon. 
\end{theorem}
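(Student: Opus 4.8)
The plan is to show that the two families being counted are in fact the \emph{same} set of partitions, so that the identity map is the desired bijection and equality of cardinalities is immediate. Fix $k$ and a partition $\pi$ of $n$. First I would record the dictionary between the $k$-measure and the sets $C_{k,m}(n)$: since the $k$-measure is by definition the length of the longest subsequence of parts with consecutive gaps at least $k$, we have $\pi \in C_{k,m}(n)$ if and only if the $k$-measure of $\pi$ is at least $m$. A subsequence of length $\ell$ with gaps at least $k$ visibly contains such subsequences of every smaller length, so $\{m : \pi \in C_{k,m}(n)\} = \{1,2,\dots,\mu\}$, where $\mu$ is the $k$-measure of $\pi$. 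In particular $\pi$ has $k$-measure exactly $m$ precisely when $\pi \in C_{k,m}(n)$ but $\pi \notin C_{k,m+1}(n)$.

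Next I would invoke Corollary \ref{Final}, read in its intended combined form across all parities of $k$ and $m$ (it is the common reformulation of Corollaries \ref{EO} and \ref{OE}): membership in $C_{k,m}(n)$ is equivalent to the geometric condition that the Ferrers diagram of $\pi$ contains its $(k,m)$-polygon. Substituting this into the previous paragraph, $\pi$ has $k$-measure exactly $m$ if and only if the Ferrers diagram of $\pi$ contains its $(k,m)$-polygon but not its $(k,m+1)$-polygon.

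The remaining step is to match this "contains the $(k,m)$- but not the $(k,m+1)$-polygon" condition with the definition of the $(k,m)$-Durfee polygon. Here I would use the two bulleted observations preceding the theorem. The strict-nesting observation shows that the set of integers $m$ for which $\pi$ contains its $(k,m)$-polygon is downward closed, and the finiteness observation shows it is bounded above; hence it is exactly $\{1,2,\dots,M(\pi)\}$ for a well-defined largest value $M(\pi)$. By definition $\pi$ has a $(k,m)$-Durfee polygon precisely when $m = M(\pi)$, which is in turn equivalent to $\pi$ containing its $(k,m)$-polygon while failing to contain its $(k,m+1)$-polygon. Chaining the equivalences gives that $\pi$ has $k$-measure $m$ if and only if $\pi$ has a $(k,m)$-Durfee polygon, so the two families of partitions of $n$ coincide and are therefore equinumerous.

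I do not expect a genuinely hard step, since the substance has been front-loaded into Corollary \ref{Final} together with the two nesting observations. The one point that needs care is confirming that "contains its $(k,m)$-polygon" is a monotone (downward-closed) property in $m$: without this, the largest such $m$ would not detect the exact-measure condition. This is exactly what the strict-containment observation supplies, while the finiteness observation guarantees that $M(\pi)$ exists. In essence the theorem records that the statistic $M(\pi)$ read off from the Durfee polygon and the $k$-measure statistic agree on every partition, upgrading Corollary \ref{Final} from a cumulative ("at least $m$") identity to the exact ("equal to $m$") identity asserted here.
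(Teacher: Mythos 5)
Your argument breaks down at the second paragraph: you read Corollary \ref{Final} as the pointwise statement that a partition $\pi$ lies in $C_{k,m}(n)$ if and only if the Ferrers diagram of $\pi$ contains its $(k,m)$-polygon. The corollary asserts only that these two sets of partitions have the same \emph{cardinality}; they are not the same set, and the bijection $\phi'$ that proves the corollary genuinely moves nodes between parts. A small counterexample: take $n=4$, $k=2$, $m=2$. The partition $3+1$ contains the subsequence $(3,1)$ with gap $2$, so it lies in $C_{2,2}(4)$, yet its Ferrers diagram does not contain the $(2,2)$-polygon (a $2\times 2$ square), since only one part is $\geq 2$. Conversely, $2+2$ contains the square but has $2$-measure $1$. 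Consequently the two families in the theorem are in general different sets --- here $\{3+1\}$ versus $\{2+2\}$ --- so the identity map cannot serve as the bijection, and your plan of showing the families coincide cannot be carried out; only their sizes agree.

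The repair is to abandon the pointwise reading and work with counts, which is what the paper does. Your first and third paragraphs are correct and are exactly the right ingredients: $\pi$ has $k$-measure exactly $m$ iff $\pi \in C_{k,m}(n) \setminus C_{k,m+1}(n)$, and $\pi$ has a $(k,m)$-Durfee polygon iff $\pi$ contains its $(k,m)$-polygon but not its $(k,m+1)$-polygon (this uses the nesting and finiteness observations, as you say). Writing $D_{k,m}(n)$ for the set of partitions of $n$ containing their $(k,m)$-polygon, the inclusions $C_{k,m+1}(n) \subseteq C_{k,m}(n)$ and $D_{k,m+1}(n) \subseteq D_{k,m}(n)$ turn both statistics into differences of cumulative counts: the number of partitions with $k$-measure $m$ is $|C_{k,m}(n)| - |C_{k,m+1}(n)|$, and the number with a $(k,m)$-Durfee polygon is $|D_{k,m}(n)| - |D_{k,m+1}(n)|$. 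Now apply Corollary \ref{Final} twice, at $m$ and at $m+1$, to get $|C_{k,m}(n)| = |D_{k,m}(n)|$ and $|C_{k,m+1}(n)| = |D_{k,m+1}(n)|$; the differences then agree. This is the paper's (implicit) argument: the equality holds at the level of cardinalities, never at the level of individual partitions.
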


\begin{remark}
\label{Durfee}
Note that for $k=2$, the $(2,m)$-polygonal of $\pi$ is a square of side $m$, and thus for a partition $\pi$ to have $(2,m)$-Durfee polygon is the same thing as $\pi$ having a Durfee square of side $m$. Thus, Theorem \ref{FinalMeasure} is a generalization of Theorem \ref{Wonderful}. 
\end{remark}

\begin{remark}
Since the maps $\phi'$ and $\psi'$ preserve the number of parts, our proof also gives a generalization of Theorem \ref{Wonderful2}. That is, the number of partitions of $n$ with $l$ parts and $k$-measure $m$ is equal to the number of partitions of $n$ with $l$ parts and $(k,m)$-Durfee polygon. 
\end{remark}

\begin{remark}
For even $k$, the $(k,m)$-polygons are rectangles irrespective of the value of $m$. Thus, for an even $k$, Theorem \ref{FinalMeasure} provides a very simple looking generalization of Theorem \ref{Wonderful}. For example, the number of partitions of $n$ with $4$-measure $m$ is equal to the number of partitions for which the largest value of $j$ such that the Ferrers diagram of the partition contains a $j \times (2j-1)$ rectangle equals $m$. Similarly, the number of partitions of $n$ with $6$-measure $m$ is equal to the number of partitions for which the largest value of $j$ such that the Ferrers diagram of the partition contains a $j \times (3j-2)$ rectangle equals $m$. For the case when $k$ is odd, the situation is relatively complex as $(k,m)$-polygons can be rectangles or not depending on the parity of $m$. 
\end{remark}

These results seem to yield interesting properties even in the case $k=1$. For example, substituting $k=1$ in Theorem \ref{General}, we get the following result. 

\begin{corollary}
\label{k=1}
The number of partitions of $n$ which have at least $m$ distinct parts is equal to the number of partitions of $n$ which have at least $\left \lfloor \frac{m}{2} \right \rfloor $ parts greater than or equal to $ \left \lceil \frac{m+1)}{2} \right \rceil $, and an additional at least $\left \lceil \frac{m}{2} \right \rceil $ parts greater than or equal to $ \left \lfloor \frac{m+1}{2} \right \rfloor $.
\end{corollary}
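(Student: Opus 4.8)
The plan is to obtain the result as the direct specialization $k=1$ of Theorem \ref{General}, and then to rewrite both sides in the simpler language of the corollary. So the first thing I would do is recall the two families $C_{k,m}(n)$ and $D_{k,m}(n)$ from the proof of Theorem \ref{General} and invoke the equality $|C_{1,m}(n)| = |D_{1,m}(n)|$ that that theorem supplies. The remaining work is purely interpretive: to check that the $k=1$ versions of these two sets coincide with the two collections of partitions described in the corollary.

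For the left-hand side, I would argue that for $k=1$ membership in $C_{1,m}(n)$ is exactly the condition of having at least $m$ distinct parts. Indeed, since the parts of a partition are weakly decreasing, any chosen subsequence is automatically weakly decreasing, so requiring consecutive differences to be at least $1$ is precisely requiring the subsequence to be \emph{strictly} decreasing. A strictly decreasing subsequence of length $m$ exists if and only if the partition has at least $m$ distinct part-sizes: selecting one representative of each of the $m$ largest distinct values produces such a subsequence, and conversely any strictly decreasing subsequence of length $m$ exhibits $m$ distinct values. Hence $|C_{1,m}(n)|$ is the number of partitions of $n$ with at least $m$ distinct parts, matching the left-hand side of Corollary \ref{k=1}.

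For the right-hand side, I would simply substitute $k=1$ into the two thresholds defining $D_{k,m}(n)$. The bound $1 + \lceil k(m-1)/2 \rceil$ becomes $1 + \lceil (m-1)/2 \rceil$, and $1 + \lfloor k(m-1)/2 \rfloor$ becomes $1 + \lfloor (m-1)/2 \rfloor$. Absorbing the additive integer $1$ into the floor and ceiling gives the elementary identities $1 + \lceil (m-1)/2 \rceil = \lceil (m+1)/2 \rceil$ and $1 + \lfloor (m-1)/2 \rfloor = \lfloor (m+1)/2 \rfloor$, which turn these into the thresholds $\lceil (m+1)/2 \rceil$ and $\lfloor (m+1)/2 \rfloor$ stated in the corollary; the part-count requirements $\lfloor m/2 \rfloor$ and $\lceil m/2 \rceil$ do not involve $k$ and so are unchanged. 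Combining the two identifications with Theorem \ref{General} yields the claim. The only step that requires any genuine thought is the left-hand equivalence between a subsequence with consecutive gaps at least $1$ and the presence of $m$ distinct parts; the floor/ceiling manipulations on the right are entirely routine, so I anticipate no real obstacle.
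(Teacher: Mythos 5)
Your proposal is correct and is essentially the paper's own (largely implicit) argument: the corollary is obtained by substituting $k=1$ into Theorem \ref{General}, reading the subsequence condition with gaps at least $1$ as the existence of $m$ distinct part values, and simplifying the thresholds via $1 + \lceil (m-1)/2 \rceil = \lceil (m+1)/2 \rceil$ and $1 + \lfloor (m-1)/2 \rfloor = \lfloor (m+1)/2 \rfloor$. Your write-up actually makes explicit the two interpretive steps the paper leaves to the reader, but there is no difference in approach.
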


That is for an odd number $m$, the number of partitions of $n$ which have at least $m$ distinct parts is equal to the number of partitions of $n$ which have at least $m$ parts greater than or equal to $\frac{m+1}{2}$. For example, the number of partitions of $n$ which have at least $7$ distinct parts is equal to the number of partitions of $n$ which have at least $7$ parts greater than or equal to $4$. 

Similarly, for an even number $m$, the number of partitions of $n$ which have at least $m$ distinct parts is equal to the number of partitions of $n$ which have at least $\frac{m}{2}$ parts greater than or equal to $\frac{m}{2}+1$, and an additional at least $\frac{m}{2}$ parts greater than or equal to $\frac{m}{2}$. For example, the number of partitions of $n$ which have at least $6$ distinct parts is equal to the number of partitions of $n$ which have at least $3$ parts greater than or equal to $4$, and an additional at least $3$ parts greater than or equal to $3$. 

Finally, substituting $k=1$ in Corollary \ref{Final} gives the following result.

\begin{corollary}
\label{2;k=1}
The number of partitions of $n$ with $m$ distinct parts is equal to the number of partitions of $n$ with $(1,m)$-Durfee polygon. 
\end{corollary}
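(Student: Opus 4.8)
The plan is to obtain Corollary \ref{2;k=1} as the straight $k=1$ specialization of Theorem \ref{FinalMeasure}, so the entire content of the argument reduces to a single identification: the $1$-measure of a partition is nothing other than its number of distinct parts. Once this is recognized, the corollary is an immediate rewording.

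First I would unpack the definition of $1$-measure. A subsequence $(\lambda_1,\ldots,\lambda_m)$ of parts satisfying $\lambda_i \geq \lambda_{i+1}+1$ for every $i$ is, because the parts are integers, precisely a \emph{strictly} decreasing subsequence of parts. Any such subsequence uses at most one part of each value, and conversely one may always choose a single representative of each distinct value to assemble a strictly decreasing subsequence of maximal length. Hence the length of the longest admissible subsequence equals the number of distinct part-sizes occurring in the partition, i.e.\ the $1$-measure of a partition coincides exactly with its number of distinct parts.

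With this in hand I would simply set $k=1$ in Theorem \ref{FinalMeasure}, which states that the number of partitions of $n$ with $k$-measure $m$ equals the number of partitions of $n$ with $(k,m)$-Durfee polygon. Substituting $k=1$ and replacing the phrase ``$1$-measure $m$'' by ``exactly $m$ distinct parts'' via the identification above yields exactly the assertion of Corollary \ref{2;k=1}. (Equivalently, one may invoke the $k=1$ case of Corollary \ref{Final} together with the monotonicity and finiteness properties of the $(k,m)$-polygons listed just before Theorem \ref{FinalMeasure}, which guarantee the existence of a well-defined $(1,m)$-Durfee polygon.)

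I do not expect any genuine obstacle in this deduction; the corollary is essentially automatic. The only point deserving a moment's care is that the identification of the $1$-measure with the distinct-part count relies on the integrality of the parts, so that a gap of at least $1$ between consecutive chosen parts is the same as a strict inequality. For $k \geq 2$ no such collapse occurs, which is precisely why the $k=1$ instance recovers a classical and familiar partition statistic rather than a genuinely new one.
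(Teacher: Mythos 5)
Your proposal is correct and follows essentially the same route as the paper: the paper obtains Corollary \ref{2;k=1} by setting $k=1$ in Corollary \ref{Final} (equivalently, in Theorem \ref{FinalMeasure}, which is its immediate consequence), exactly as you do. Your explicit verification that the $1$-measure of a partition equals its number of distinct part values is the same identification the paper leaves implicit, so you have merely spelled out a step the paper takes for granted.
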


\section{Concluding Remarks}

It will be very interesting to see if one could utilize the ideas in this paper to obtain a combinatorial proof and possibly generalizations of Theorem \ref{Wonderful3}. 

\section{Acknowledgement}
  
 The author acknowledges the support of IISER Mohali for providing research facilities and fellowship.

\end{document}